\documentclass{article}

\usepackage[utf8]{inputenc}
\usepackage{amssymb, amsmath}
\usepackage{amsthm}
\usepackage{amsfonts} 
\usepackage{booktabs}

\newtheorem{thm}{Theorem}[section]
\newtheorem{lem}{Lemma}[section]

\newtheorem{dfn}{Definition}[section]

\usepackage[parfill]{parskip}

\title{Canonical forms for a class of pairs of commuting nilpotent matrices under simultaneous similarity}
\author{Jiuzhao Hua} 

\begin{document}
\date{\vspace{-0.5cm}}\date{} 
\maketitle
\begin{abstract}
We present canonical forms for all indecomposable pairs $(A,B)$ of commuting nilpotent matrices over an arbitrary field under simultaneous similarity, 
where $A$ is the direct sum of two Jordan blocks with distinct sizes. We also provide the transformation matrix $X$ such that $(A, X^{-1}BX)$ is in its canonical form. 
\end{abstract}

\section{Introduction}

Let $\mathbb{F}$ be a field and let $n$ be a non-negative integer. Two pairs of $n\times n$ matrices $(A,B)$ and $(C,D)$ over $\mathbb{F}$ are said to be \textit{similiar} if there exists a non-singular 
matrix $X$ over $\mathbb{F}$ such that $X^{-1}AX = C$ and $X^{-1}BX = D$. A pair $(A,B)$ is said to be a \textit{commuting nilpotent} pair if $AB=BA$, $A^n=0$ and $B^n=0$. 
The direct sum of two matrices $M$ and $N$, denoted by $M\oplus N$, is the following block matrix:
\[
\left[
\begin{array}{cc}
M & 0 \\ 
0 & N \\
\end{array}
\right].
\]

We say that a pair $(A,B)$ of $n\times n$ matrices over a field $\mathbb{F}$ is \textit{decomposable} if it is similar to a pair of the form $(A_1\oplus A_2, B_1 \oplus B_2)$, where $A_1$ and $B_1$ are square matrices of the same size.

Let $\textrm{Mat}(n,\mathbb{F})$ be the matrix algebra of order $n$ over $\mathbb{F}$, consisting of all $n\times n$ matrices over $\mathbb{F}$, and let $\mathrm{GL}(n,\mathbb{F})$ be the General Linear Group of order $n$ over $\mathbb{F}$, consisting of all non-singular $n\times n$ matrices over $\mathbb{F}$. Given a matrix $A\in\textrm{Mat}(n,\mathbb{F})$, the \textit{stabilizer group} of $A$ in $\mathrm{GL}(n,\mathbb{F})$, denoted by $\mathrm{Stab}(A)$, is defined as follows:
\[
\mathrm{Stab}(A) = \big\{X\in\mathrm{GL}(n,\mathbb{F}) : X^{-1} A X = A \big\},
\]
and the \textit{nilpotent commutator} of $A$, denoted by $\mathrm{NilC}(A)$, is defined as follows:
\[
\mathrm{NilC}(A) = \big\{B\in\textrm{Mat}(n,\mathbb{F}) : B^n=0, AB=BA \big\}.
\]
Thus, we have the following group action:
\begin{align*}\label{group action}
\mathrm{Stab}(A) \times \mathrm{NilC}(A) &\to \mathrm{NilC}(A) \\
(X, B)\,\,\,\,\,\,\,\,\,\,\,\,\,\,\,\, &\mapsto  X^{-1}BX.
\end{align*}
It follows that two pairs $(A,B)$ and $(A,B')$ are similar if and only if $B$ and $B'$ are in the same orbit under the above action.

It is well-known that classifying all commuting nilpotent pairs up to simultaneous similarity is considered a 'wild' problem, as it contains the problem of classifying pairs of matrices up to simultaneous similarity. 
Extensive research has been conducted in this field over the past four decades, and interested readers can refer to \cite{VB-2001}, \cite{BI-2008}, and \cite{BFPS-2021}.

The matrix that is commonly referred to as the Jordan matrix or Jordan block of order $n$ with eigenvalue 0 is given by the following expression:
\[
\newcommand*{\temp}{\multicolumn{1}{|}{}}
J_n = 
\left[
\begin{array}{ccccc}
0 & 1 & 0 & \dots & 0 \\ 
0 & 0 & 1 & \dots & 0 \\ 
\vdots & \vdots & \vdots & \ddots & \vdots \\
0 & 0 & 0 & \dots & 1 \\
0 & 0 & 0 & \dots & 0 \\
\end{array}
\right]_{n\times n}.
\]
According to the Jordan normal form theorem, any nilpotent matrix can be expressed as a direct sum of Jordan matrices.

This paper aims to classify a particular class of commuting nilpotent pairs $(A,B)$ under simultaneous similarity, where $A$ is similar to a direct sum of two Jordan blocks with distinct sizes. 
To achieve this, we use a matrix reduction process similar to the Belitskii algorithm described in \cite{GB-2000}.

This paper is structured as follows. In Section 2, we introduce the concept of canonical rank for nilpotent matrices that commute with $J_m\oplus J_n$, where $m>n$. 
We demonstrate that a matrix with a particular canonical rank must have a certain structure and that the canonical rank is invariant under the group action mentioned above.
In Section 3, we present the canonical forms for all indecomposable nilpotent commuting pairs $(J_m\oplus J_n, B)$ and provide the transformation 
matrix $X$ such that $X^{-1}BX$ is in its canonical form.
In the Appendix, we list all possible canonical forms for the special case where $m=6$ and $n=4$.

\section{The canonical ranks}

\begin{dfn} 
Let $M=[a_{ij}]$ be an $m\times n$ matrix. For $1\leq i\leq m$ and $1\leq j\leq n$, the arm length of the element $a_{ij}$ is defined as follows: 
\begin{itemize}
\item If $m\geq n$, then the arm length of $a_{ij}$ is $j-i$,
\item If $m<n$, then the arm length of $a_{ij}$ is $j-i-(n-m)$.
\end{itemize}
\end{dfn}

\begin{dfn}
A matrix $M = [a_{ij}]$ of order $m\times n$ is called a TA-matrix if it satisfies the following conditions:
\begin{itemize}
\item If $a_{ij}$ and $a_{kl}$ have the same arm length, then $a_{ij} = a_{kl}$,
\item If the arm length of $a_{ij}$ is less than 0, then $a_{ij} = 0$.
\end{itemize}
\end{dfn}

By the above definitions, a TA-matrix of order $m\times n$ has the following form when $m\ge n$:
\[
\newcommand*{\temp}{\multicolumn{1}{|}{}}
\left[
\begin{array}{ccccc}
a_0 & a_1 & \dots & a_{n-2} & a_{n-1} \\ 
0 & a_0 & \dots & a_{n-3} & a_{n-2} \\ 
\vdots & \vdots & \ddots & \vdots & \vdots \\
0 & 0 & \dots & a_0 & a_1 \\
0 & 0 & \dots & 0 & a_0 \\
\cline{1-5}
0 & 0 & \dots & 0 & 0 \\
\vdots & \vdots & \ddots & \vdots & \vdots \\
0 & 0 & \dots & 0 & 0 \\
\end{array}
\right]_{m \times n},
\]
or the following form when $m< n$:
\[
\newcommand*{\temp}{\multicolumn{1}{|}{}}
\left[
\begin{array}{ccccccccc}
0 & \dots & 0 & \temp & a_0 & a_1 & \dots & a_{m-2} & a_{m-1} \\ 
0 & \dots & 0 & \temp & 0 & a_0 & \dots & a_{m-3} & a_{m-2} \\ 
\vdots & \ddots & \vdots & \temp & \vdots & \vdots & \ddots & \vdots & \vdots \\
0 & \dots & 0 & \temp & 0 & 0 & \dots & a_0 & a_1 \\
0 & \dots & 0 & \temp & 0 & 0 & \dots & 0 & a_0 \\
\end{array}
\right]_{m \times n}. 
\]

\begin{thm}[Turnbull and Aitken \cite{TA-1948}]\label{T-A Thm}
Let $\lambda=[\lambda_1, \lambda_2, \dots, \lambda_s]$ be a partition with $\lambda_1\ge\lambda_2\ge\cdots\ge\lambda_s\ge 1$ and $A=J_{\lambda_1} \oplus \cdots \oplus J_{\lambda_s}$. 
Then any matrix $B$ that commutes with $A$ can be written as an $s\times s$ block matrix in the following form:
\begin{equation}\label{TA block matrix}
\newcommand*{\temp}{\multicolumn{1}{|}{}}
\left[
\begin{array}{cccc}
B_{11} & B_{12} & \dots & B_{1s} \\ 
B_{21} & B_{22} & \dots & B_{2s} \\ 
\vdots & \vdots & \ddots & \vdots \\
B_{s1} & B_{s2} & \dots & B_{ss} \\
\end{array}
\right],
\end{equation}
where each submatrix $B_{ij}$ is a TA-matrix of order $\lambda_i\times \lambda_j$ for $1\le i,j \le s$.
\end{thm}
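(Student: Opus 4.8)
The plan is to reduce the commutation relation $AB=BA$ to a family of independent two-block equations and then to solve each of them by reading multiplication by a Jordan block as a shift operator. First I would write $B$ in block form $B=[B_{ij}]_{1\le i,j\le s}$ conforming to the decomposition $A=J_{\lambda_1}\oplus\cdots\oplus J_{\lambda_s}$, so that $B_{ij}$ has order $\lambda_i\times\lambda_j$. Because $A$ is block-diagonal, comparing the $(i,j)$ blocks of $AB$ and $BA$ shows that $AB=BA$ is equivalent to the system of equations $J_{\lambda_i}B_{ij}=B_{ij}J_{\lambda_j}$ for all $i,j$. Hence it suffices to prove the following local statement: if $C$ is an $m\times n$ matrix with $J_mC=CJ_n$, then $C$ is a TA-matrix of order $m\times n$.

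Next I would unwind the single equation $J_mC=CJ_n$ entrywise. Left multiplication by $J_m$ shifts the rows of $C$ upward by one position (inserting a zero row at the bottom), so $(J_mC)_{ij}=C_{i+1,j}$ for $i<m$ and $(J_mC)_{mj}=0$; right multiplication by $J_n$ shifts the columns of $C$ to the right by one (inserting a zero column on the left), so $(CJ_n)_{ij}=C_{i,j-1}$ for $j>1$ and $(CJ_n)_{i1}=0$. Equating the two matrices gives, for interior indices, the recurrence $C_{i+1,j}=C_{i,j-1}$, which says exactly that $C_{ij}$ depends only on the difference $j-i$, i.e.\ that $C$ is constant along each diagonal; at the boundary it gives the vanishing conditions $C_{m,j}=0$ for $1\le j\le n-1$ together with $C_{i,1}=0$ for $2\le i\le m$. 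This is the conceptual core of the argument.

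Finally I would translate the boundary vanishing into the language of arm length. Using diagonal-constancy, the equations $C_{m,j}=0$ ($j\le n-1$) and $C_{i,1}=0$ ($i\ge 2$) force the diagonal $\{(i,j):j-i=d\}$ to be identically zero precisely when $d\le\max(n-1-m,\,-1)$, while the remaining diagonals may carry arbitrary scalars; a short computation shows that this threshold coincides with the condition ``arm length less than $0$'' both when $m\ge n$ (where the arm length of $a_{ij}$ is $j-i$) and when $m<n$ (where it is $j-i-(n-m)$). Thus $C$ satisfies both defining conditions of a TA-matrix, which finishes the proof; the same entrywise computation run in reverse also yields the converse, namely that every $s\times s$ block matrix with TA-matrix blocks of the prescribed orders commutes with $A$, so this in fact describes the whole commutant. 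The only delicate point is the bookkeeping with the two cases $m\ge n$ and $m<n$ in the arm-length normalization, which I expect to be the main (but minor) obstacle.
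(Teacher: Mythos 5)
Your proof is correct and complete: the reduction of $AB=BA$ to the block equations $J_{\lambda_i}B_{ij}=B_{ij}J_{\lambda_j}$, the shift-operator reading of $J_mC=CJ_n$, and the bookkeeping identifying the vanishing threshold $d\le\max(n-1-m,-1)$ with the condition ``arm length $<0$'' are all accurate. The paper itself gives no proof of this statement (it is quoted from Turnbull and Aitken), and your argument is precisely the standard one for that classical result, so there is nothing to compare beyond noting that your write-up fills in the omitted details correctly.
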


For any block matrix of the form (\ref{TA block matrix}), the arm length of an element is defined as the arm length of the element relative to the block it resides in. 
If $A$ is the Jordan block matrix $J_m \oplus J_n$ with $s = m - n \geq 1$ and $B$ is a square matrix that commutes with $A$, then by Theorem \ref{T-A Thm}, $B$ can be written as a block matrix of the following form:
\begin{equation}\label{B full form}
\arraycolsep=2.5pt\def\arraystretch{1.2}
\newcommand*{\temp}{\multicolumn{1}{|}{}}
\left[
\begin{array}{ccccccccccccccc}
a_0 & a_1 & \cdots & a_{s-1} & a_s & a_{s+1} & \cdots & a_{m-2} & a_{m-1} & \temp & b_0 & b_1 & \cdots & b_{n-2} & b_{n-1} \\ 
0 & a_0 & \cdots & a_{s-2} & a_{s-1} & a_s & \cdots & a_{m-3} & a_{m-2} & \temp &  0 & b_0 & \cdots & b_{n-3} & b_{n-2} \\ 
\vdots & \vdots & \ddots & \vdots  & \vdots & \vdots & \ddots & \vdots & \vdots & \temp & \vdots & \vdots & \ddots & \vdots & \vdots \\
0 & 0 & \cdots & a_0 & a_1 & a_2 & \cdots & a_{s} & a_{s+1} & \temp  &  0 & 0 & \cdots & b_0 & b_{1} \\
0 & 0 & \cdots & 0  & a_0 & a_1 & \cdots & a_{s-1} & a_s & \temp  &  0 & 0 & \cdots & 0 & b_{0} \\
0 & 0 & \cdots & 0 & 0 & a_0 & \cdots & a_{s-2} & a_{s-1} & \temp  &  0 & 0 & \cdots & 0 & 0 \\
\vdots & \vdots & \ddots & \vdots  & \vdots & \vdots & \ddots & \vdots & \vdots & \temp & \vdots & \vdots & \ddots & \vdots & \vdots \\
0 & 0 & \cdots & 0 & 0 & 0 & \cdots & a_0 & a_1 & \temp  &  0 & 0 & \cdots & 0 & 0 \\
0 & 0 & \cdots & 0 & 0 & 0 & \cdots & 0 & a_0 & \temp & 0 & 0 & \cdots & 0 & 0 \\
\cline{1-15}
0 & 0 & \cdots & 0 & c_0 & c_1 & \cdots & c_{n-2} & c_{n-1} & \temp & d_0 & d_1 & \cdots & d_{n-2} & d_{n-1} \\ 
0 & 0 & \cdots & 0 & 0 & c_0 & \cdots &  c_{n-3} & c_{n-2} & \temp &  0 & d_0 & \dots & d_{n-3} & d_{n-2} \\ 
\vdots & \vdots & \ddots & \vdots & \vdots & \vdots & \ddots & \vdots & \vdots & \temp & \vdots & \vdots & \ddots & \vdots & \vdots \\
0 & 0 & \cdots & 0 & 0 & 0 & \cdots & c_0 & c_1 & \temp  &  0 & 0 & \dots & d_{0} & d_{1} \\
0 & 0 & \cdots & 0 & 0 & 0 & \cdots & 0 &  c_0 & \temp & 0 & 0 & \cdots & 0 & d_{0} \\
\end{array}
\right].
\end{equation}

If $B$ is nilpotent, then we have $a_0=0$ and $d_0=0$. Therefore, every matrix of the form (\ref{B full form}) is uniquely determined 
by its values in row 1 and row $m+1$, and can be represented by the following $2 \times 2$ block matrix:
\begin{equation}\label{B short form}
\arraycolsep=2.5pt\def\arraystretch{1.2}
\newcommand*{\temp}{\multicolumn{1}{|}{}}
\left[
\begin{array}{ccccccccccccccc}
a_0 & a_1 & \cdots & a_{s-1} & a_s & a_{s+1} & \cdots & a_{m-2} & a_{m-1} & \temp & b_0 & b_1 & \cdots & b_{n-2} & b_{n-1} \\ 
\cline{1-15}
0 & 0 & \cdots & 0 & c_0 & c_1 & \cdots & c_{n-2} & c_{n-1} & \temp & d_0 & d_1 & \cdots & d_{n-2} & d_{n-1} \\ 
\end{array}
\right].
\end{equation}
Conversely, every matrix of the form (\ref{B short form}) can be expanded to a unique matrix of the form (\ref{B full form}).
Hence, we may use the forms (\ref{B full form}) and (\ref{B short form}) interchangeably throughout the rest of this paper.

\begin{lem}\label{B indecom form}
Consider the direct sum $A=J_{m}\oplus J_n$, where $s=m - n$ with $s\geq 1$. Let $B$ be an element of the nilpotent commutator
$\mathrm{NilC}(A)$ that can be expressed in the following form:
\[
\arraycolsep=4pt\def\arraystretch{1.2}
\newcommand*{\temp}{\multicolumn{1}{|}{}}
\left[
\begin{array}{cccccccccccc}
0 & a_1 & \cdots  & a_{s-1}  & a_{s} & \cdots & a_{m-1} & \temp & b_0 & b_1 & \cdots  & b_{n-1} \\ 
\cline{1-12}
0 & 0 & \cdots & 0 & c_0 & \cdots  & c_{n-1} & \temp & 0 & d_1 & \cdots & d_{n-1} \\ 
\end{array}
\right].
\]
Suppose that $(A,B)$ is indecomposable. Then, there exists an integer $r$ such that $1 \le r \le n$ and the following conditions hold: 
$(b_{n-r}, c_{n-r}) \ne (0, 0)$, $(b_i, c_i) = (0, 0)$ for $0\le i < n-r$, and $d_i=a_i$ for $1\le i \le n-r$. 
\end{lem}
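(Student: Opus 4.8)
The plan is to translate the statement into the language of $\mathbb{F}[t]$-modules and then, assuming the conclusion fails, to produce a nontrivial decomposition of $(A,B)$. View $V=\mathbb{F}^{m+n}$ as the $\mathbb{F}[t]$-module $\mathbb{F}[t]/(t^{m})\oplus\mathbb{F}[t]/(t^{n})$ on which $t$ acts as $A=J_{m}\oplus J_{n}$. Since $B$ commutes with $A$ it is an $\mathbb{F}[t]$-endomorphism of $V$, and reading off the first rows of the block TA-form of Theorem~\ref{T-A Thm} (the data recorded in \eqref{B short form}) shows that $B$ acts by
\begin{equation*}
(x,y)\longmapsto\bigl(a(t)\,x+t^{s}b(t)\,y,\; c(t)\,x+d(t)\,y\bigr),
\end{equation*}
where $a(t)=\sum_{i}a_{i}t^{i}$, $b(t)=\sum_{i}b_{i}t^{i}$, $c(t)=\sum_{i}c_{i}t^{i}$, $d(t)=\sum_{i}d_{i}t^{i}$, and $a_{0}=d_{0}=0$ by nilpotency. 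In this language $(A,B)$ is decomposable exactly when $V$ is the direct sum of two nonzero $\mathbb{F}[t]$-submodules that are each stable under $B$.

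The index $r$ is disposed of first. If $(b_{i},c_{i})=(0,0)$ for all $i$, then $b(t)=c(t)=0$, so $B$ preserves each of $\mathbb{F}[t]/(t^{m})$ and $\mathbb{F}[t]/(t^{n})$ and $(A,B)$ is decomposable, contrary to hypothesis. Hence $\ell:=\min\{\,i:(b_{i},c_{i})\neq(0,0)\,\}$ is well defined with $0\le\ell\le n-1$, and setting $r:=n-\ell$ gives $1\le r\le n$ together with the first two asserted conditions. It remains to show $d_{i}=a_{i}$ for $1\le i\le\ell$.

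Suppose this fails, and let $i_{0}$ be the least index in $\{1,\dots,\ell\}$ with $a_{i_{0}}\neq d_{i_{0}}$. Then $a(t)-d(t)=t^{i_{0}}\varepsilon(t)$ with $\varepsilon(0)\neq0$, while $\operatorname{val}c(t)\ge\ell\ge i_{0}$ and $\operatorname{val}t^{s}b(t)\ge s+\ell>i_{0}$ since $b_{i}=c_{i}=0$ for $i<\ell$. Working in $\mathbb{F}[[t]]$ one seeks $w$ and $u$ solving
\begin{equation*}
t^{s}b(t)\,w^{2}+(a(t)-d(t))\,w-c(t)\equiv0\pmod{t^{n}},\qquad
c(t)\,u^{2}+(d(t)-a(t))\,u-t^{s}b(t)\equiv0\pmod{t^{m}}.
\end{equation*}
Because every term of each equation is divisible by $t^{i_{0}}$, one may cancel the unit multiple $t^{i_{0}}\varepsilon(t)$; the resulting congruences have a unit coefficient on the linear term, so they can be solved one coefficient at a time, yielding $w\in\mathbb{F}[t]/(t^{n})$ and $u\in\mathbb{F}[t]/(t^{m})$, and a short bootstrap on valuations gives $\operatorname{val}u\ge s$. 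For such $w,u$ the choice of equations makes the $\mathbb{F}[t]$-submodules $W_{1}:=\mathbb{F}[t]\cdot(1,w)\cong\mathbb{F}[t]/(t^{m})$ and $W_{2}:=\mathbb{F}[t]\cdot(u,1)\cong\mathbb{F}[t]/(t^{n})$ stable under $B$; and since $\operatorname{val}uw\ge1$, the element $1-uw$ is a unit modulo $t^{n}$, whence $(p,q)\mapsto p\cdot(1,w)+q\cdot(u,1)$ is a bijection $\mathbb{F}[t]/(t^{m})\times\mathbb{F}[t]/(t^{n})\to V$. Thus $V=W_{1}\oplus W_{2}$ with both summands nonzero and stable under $A$ and $B$, contradicting indecomposability. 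Therefore $d_{i}=a_{i}$ for $1\le i\le\ell=n-r$.

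The only delicate point is the construction of $w$ and $u$: one must check that, after cancelling $t^{i_{0}}\varepsilon(t)$, the coefficient recursion for $w$ and $u$ never requires a division other than by the nonzero constant $\varepsilon(0)$ — so the argument is valid over an arbitrary field and in any characteristic — and that the quadratic term stays of strictly higher order than the current step throughout, so the recursion closes; the estimate $\operatorname{val}u\ge s$ needed for the direct-sum step falls out of the same recursion. The module dictionary, the decomposability criterion, and the verification that $W_{1}$ and $W_{2}$ span $V$ are then routine bookkeeping.
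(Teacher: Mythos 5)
Your proposal is correct, but it takes a genuinely different route from the paper. The paper's own proof is a terse Belitskii-style reduction: after choosing the least $k$ with $(b_k,c_k)\neq(0,0)$ (your step is identical here), it argues that if $d_1\neq a_1$ (and "similarly" for the higher indices up to $n-r$) one could progressively conjugate $B$ by elements of $\mathrm{Stab}(A)$ so as to annihilate all the pairs $(b_i,c_i)$, leaving $B$ block diagonal and contradicting indecomposability; the conjugation sweep itself is only asserted, not carried out. You instead work in the $\mathbb{F}[t]$-module picture and, assuming some $a_{i_0}\neq d_{i_0}$ with $i_0\le n-r$, manufacture an explicit invariant splitting: the generators $(1,w)$ and $(u,1)$, with $w,u$ solving your two quadratic congruences, are in effect the columns of the stabilizer matrix the paper would conjugate by, and the unit $\varepsilon(0)=a_{i_0}-d_{i_0}$ on the linear term is exactly what powers both your coefficient recursion and the paper's implicit sweep. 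Your solvability analysis is sound: in each congruence the quadratic coefficient has positive order (for the second one, only after restricting to $\operatorname{val}u\ge s$, which the recursion itself enforces since the forcing term $t^{s}b/t^{i_0}$ has order at least $s$), so the coefficient-by-coefficient solution divides only by $\varepsilon(0)$ and works over any field in any characteristic; the conditions $\operatorname{val}u\ge s$ and $\operatorname{val}(uw)\ge 1$ then give $W_2\cong\mathbb{F}[t]/(t^n)$ and the bijectivity of $(p,q)\mapsto p(1,w)+q(u,1)$, hence a genuine decomposition contradicting indecomposability. What your approach buys is a self-contained, fully verified replacement for the paper's "can be progressively reduced" step, handling all indices $i_0$ uniformly rather than doing $d_1$ and appealing to "similarly"; what the paper's formulation buys is that its conjugation mechanism is the same one reused later in the proofs of Theorems~\ref{thm can} and~\ref{thm inv}, whereas your module construction, while cleaner for this lemma, is local to it.
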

\begin{proof}
Since $(A,B)$ is indecomposable, at least one of the following pairs is not equal to $(0, 0)$:
\[
(b_0,c_0), (b_1,c_1),\cdots,(b_{n-1},c_{n-1}).
\]
Suppose that $k$ is the smallest integer such that $(b_k, c_k) \ne (0,0)$. Let $r=n-k$, then we have 
\[
\arraycolsep=2.5pt\def\arraystretch{1.2}
\newcommand*{\temp}{\multicolumn{1}{|}{}}
B =
\left[
\begin{array}{cccccccccccccccc}
0 & a_1 & \cdots & \cdot  & a_{m-r} &  \cdots & a_{m-1} & \temp & 0 & 0  & \cdots & 0 & b_{n-r}  & b_{n-r+1} & \cdots & b_{n-1} \\ 
\cline{1-16}
0 & 0 & \cdots & 0 & c_{n-r} &  \cdots & c_{n-1} & \temp & 0 & d_1 & \cdots & \cdot & d_{n-r} & d_{n-r+1} & \cdots & d_{n-1} \\ 
\end{array}
\right].
\]

For any $X\in\mathrm{Stab}(A)$, $(A, X^{-1}BX)$ is similar to $(A,B)$.
If $d_1\ne a_1$, then the following pairs can be progressively reduced to $(0,0)$ by applying appropriate conjuations on $B$:
\[
(b_0,c_0), (b_1,c_1),\cdots,(b_{n-1},c_{n-1}),
\]
which contradicts the assumption that $(A, B)$ is indecomposable. Thus, we have $d_1= a_1$. Similarly, by applying the same arguments to the pairs 
$(a_i,d_i)$ for $2 \le i \le n-r$, we obtain the conclusion that $a_i=d_i$ for $2 \le i \le n-r$. Thus, $B$ has the following form:
\begin{equation}
\arraycolsep=2.5pt\def\arraystretch{1.2}
\newcommand*{\temp}{\multicolumn{1}{|}{}}
\left[
\begin{array}{cccccccccccccccc}
0 & a_1 & \cdots & \cdot  & a_{m-r} &  \cdots & a_{m-1} & \temp & 0 & 0  & \cdots & 0 & b_{n-r}  & b_{n-r+1} & \cdots & b_{n-1} \\ 
\cline{1-16}
0 & 0 & \cdots & 0 & c_{n-r} &  \cdots & c_{n-1} & \temp & 0 & a_1 & \cdots & \cdot & a_{n-r} & d_{n-r+1} & \cdots & d_{n-1} \\ 
\end{array}
\right].
\end{equation}
\end{proof}

The integer $r$ from Lemma \ref{B indecom form} is called the \textit{canonical rank} of $B$. If $(A,B)$ is decomposable, then the canonical rank of $B$ is defined to be 0. 
The next lemma shows that the canonical rank is invariant under the group action of $\mathrm{NilC}(A)/\mathrm{Stab}(A)$.

\begin{lem}\label{rank inv}
Consider the direct sum $A=J_{m}\oplus J_n$, where $s=m - n$ with $s\geq 1$. Let $B \in \mathrm{NilC}(A)$ and $X\in\mathrm{Stab}(A)$. 
Then $X^{-1}BX$ and $B$ have the same canonical rank.
\end{lem}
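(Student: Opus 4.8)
The plan is to show that the canonical rank, which is defined via the position of the last nonzero pair $(b_i,c_i)$ (equivalently, via the pattern of zeros in the off‑diagonal blocks of $B$ together with the agreement $d_i=a_i$ for small $i$), is preserved by conjugation by any $X\in\mathrm{Stab}(A)$. First I would reduce to the indecomposable case: if $(A,B)$ is decomposable then, by definition, its canonical rank is $0$, and since $(A,X^{-1}BX)$ is similar to $(A,B)$ it is also decomposable, hence also has canonical rank $0$; so from now on assume $(A,B)$ is indecomposable and let $r$ be its canonical rank as furnished by Lemma~\ref{B indecom form}.

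Next I would obtain an intrinsic, basis‑free description of $r$ so that it is manifestly conjugation‑invariant. The natural candidate is to express $r$ in terms of the $\mathbb{F}[t]$‑module structure: writing $V=\mathbb{F}^{m+n}$ as a module over $\mathbb{F}[A,B]$, or more simply comparing the chains $\ker B^j$, $\operatorname{im} B^j$, $\ker A^j\cap\operatorname{im} B^j$, etc., one should be able to pin $r$ down as (for instance) the largest $j$ for which a certain composite map built from $A$ and $B$ is nonzero, or as a difference of ranks of polynomials in $A$ and $B$. Concretely, from the short form (\ref{B short form}) the entries $b_{n-r},c_{n-r}$ are the ``first'' nonzero data in the right‑hand blocks, and this is detected by the rank of $B$ restricted to, or projected onto, the $J_n$‑summand in a way that only depends on the pair of operators, not on the chosen basis realizing $A=J_m\oplus J_n$. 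I would make this precise by writing $r = n - k$ where $k$ is characterized by $A^{k}$-type conditions interacting with $B$; any two such characterizations differing by conjugation give the same number because ranks and dimensions of kernels/images of expressions in $A,X^{-1}BX$ equal those of the corresponding expressions in $A,B$.

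An alternative, more hands‑on route — and the one I would actually write out if the intrinsic description proves awkward — is direct: take $X\in\mathrm{Stab}(A)$, so $X$ commutes with $A=J_m\oplus J_n$ and hence is itself a block upper‑triangular Turnbull–Aitken matrix (by Theorem~\ref{T-A Thm} applied to $X$, together with invertibility), and compute the short form of $X^{-1}BX$ explicitly, showing that the index of the first nonzero right‑hand pair is unchanged and that the equalities $d_i=a_i$ for $i\le n-r$ persist. Here one uses that $\mathrm{Stab}(A)$ acts on the relevant truncated data through unipotent transformations that cannot create or destroy the leading nonzero pair $(b_{n-r},c_{n-r})$ nor disturb the forced equalities below it.

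The main obstacle I expect is the bookkeeping in either approach: in the intrinsic approach, finding the cleanest conjugation‑invariant quantity that exactly recovers $r$ (and not $r\pm1$ in edge cases such as $r=n$ or when some of the $a_i$ vanish) requires care; in the computational approach, the obstacle is controlling the combined effect of $X^{-1}(\cdot)X$ on the four TA‑blocks of $B$ simultaneously and verifying that the ``diagonal'' part of $X$ (the scalars $x_0$ on each Jordan block) only rescales $(b_{n-r},c_{n-r})$ by a unit while the strictly‑upper part of $X$ contributes only to higher‑index entries $b_i,c_i,d_i$ with $i>n-r$. Once that propagation statement is established, invariance of $r$ — and hence the lemma — follows immediately, and applying the same argument to $X$ and $X^{-1}$ shows the relation is symmetric, as it must be.
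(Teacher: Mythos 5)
Your second, ``hands-on'' route --- writing $X\in\mathrm{Stab}(A)$ in Turnbull--Aitken form with invertible diagonal parameters $x_0,w_0$ and checking that conjugation leaves the index of the first nonzero pair $(b_{n-r},c_{n-r})$ and the equalities $d_i=a_i$ ($i\le n-r$) intact, only rescaling the leading pair by units --- is exactly the paper's proof, which likewise just asserts the computed form of $X^{-1}BX$ with $(b'_{n-r},c'_{n-r})=(b_{n-r}w_0,\,c_{n-r}w_0^{-1})\ne(0,0)$. Your preliminary reduction of the decomposable (rank $0$) case is a harmless addition the paper leaves implicit, so the proposal is correct and essentially the same argument.
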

\begin{proof} 
Suppose that $B$ has canonical rank $r$ with $r\ge 1$. According to Lemma \ref{B indecom form}, $B$ has the following for:
\[
\arraycolsep=2.5pt\def\arraystretch{1.2}
\newcommand*{\temp}{\multicolumn{1}{|}{}}
\left[
\begin{array}{cccccccccccccccc}
0 & a_1 & \cdots & \cdot  & a_{m-r} &  \cdots & a_{m-1} & \temp & 0 & 0  & \cdots & 0 & b_{n-r}  & b_{n-r+1} & \cdots & b_{n-1} \\ 
\cline{1-16}
0 & 0 & \cdots & 0 & c_{n-r} &  \cdots & c_{n-1} & \temp & 0 & a_1 & \cdots & \cdot & a_{n-r} & d_{n-r+1} & \cdots & d_{n-1} \\ 
\end{array}
\right]
\]
with $(b_{n-r}, c_{n-r}) \ne (0,0)$. Theorem \ref{T-A Thm} implies that 
\[
\arraycolsep=3pt\def\arraystretch{1.2}
\newcommand*{\temp}{\multicolumn{1}{|}{}}
X = 
\left[
\begin{array}{ccccccccccccccc}
x_0 & x_1 & \cdots & x_{s-1} & x_s & \cdots  & x_{m-1} & \temp & y_0 & y_1 & \cdots & y_{n-1} \\ 
\cline{1-12}
0 & 0 & \cdots & 0 & z_0 & \cdots & z_{n-1}  & \temp & w_0 & w_1 & \cdots & w_{n-1} \\ 
\end{array}
\right]
\]
with $x_0\ne 0$ and $w_0\ne 0$. It can be shown that $X^{-1}BX$ has the following form:
\[
\arraycolsep=2.5pt\def\arraystretch{1.2}
\newcommand*{\temp}{\multicolumn{1}{|}{}}
\left[
\begin{array}{cccccccccccccccc}
0 & a_1 & \cdots & a_{m-r-1}  & a'_{m-r} & \cdots & a'_{m-1} & \temp & 0 & 0  & \cdots & 0 & b'_{n-r}  & b'_{n-r+1} & \cdots & b'_{n-1} \\ 
\cline{1-16}
0 & 0 & \cdots & 0 & c'_{n-r} &  \cdots & c'_{n-1} & \temp & 0 & a_1 & \cdots & \cdot & a_{n-r} & d'_{n-r+1} & \cdots & d'_{n-1} \\ 
\end{array}
\right],
\]
where $(b'_{n-r}, c'_{n-r}) = (b_{n-r}w_0,  c_{n-r} w_0^{-1}) \ne (0, 0)$. Thus $X^{-1}BX$ has canonical rank $r$.
\end{proof}

\section{The canonical forms}

The main results of this paper are presented in the following two theorems.

\begin{thm} \label{thm can}
Consider positive integers $m$ and $n$ with $m>n$, and let $(A,B)$ be an indecomposable pair of commuting nilpotent matrices 
of order $(m+n)\times (m+n)$ over $\mathbb{F}$. Assume that $A$ is similar to the Jordan block matrix $J_{m}\oplus J_n$. 
Then, $(A,B)$ is similar to a pair that is in one of the following forms:
\begin{equation}\label{canonical form}
(J_m\oplus J_n, B_{m,n,r}) \text{ or } (J_m\oplus J_n, B'_{m,n,r}),
\end{equation}
where $B_{m,n,r}$ has the following form:
\[
\arraycolsep=3.8pt\def\arraystretch{1.2}
\newcommand*{\temp}{\multicolumn{1}{|}{}}
\left[
\begin{array}{ccccccccccccccccc}
0 & a_1 & \cdots & a_{m-r-1} & 0 & 0 & \cdots  & 0 & \temp & 0 & 0 & \cdots & 0 & b_{n-r} & b_{n-r+1} & \cdots & b_{n-1}  \\ 
\cline{1-17}
0 & 0 & \cdots & 0 & 1 & 0 & \cdots & 0 & \temp & 0 & a_1 & \cdots & \cdot & a_{n-r} & d_{n-r+1} & \cdots & d_{n-1} \\ 
\end{array}
\right],
\]
and $B'_{m,n,r}$ has the following form:
\[
\arraycolsep=2.5pt\def\arraystretch{1.2}
\newcommand*{\temp}{\multicolumn{1}{|}{}}
\left[
\begin{array}{ccccccccccccccccc}
0 & a_1 & \cdots & a_{m-r-1} & 0 & 0 & \cdots  & 0 & \temp & 0 & 0 & \cdots & 0 & 1 & 0 & \cdots & 0  \\ 
\cline{1-17}
0 & 0 & \cdots & 0 & 0 & c_{n-r+1} & \cdots & c_{n-1} & \temp & 0 & a_1 & \cdots & \cdot & a_{n-r} & d_{n-r+1} & \cdots & d_{n-1} \\ 
\end{array}
\right].
\]
where $1\le r \le n$ and $a_i$ $(1\le i \le m-r-1)$, $b_i$ $(n-r\le i \le n-1)$, $c_i$ $(n-r+1\le i \le n-1)$ and $d_i$ $(n-r+1\le i \le n-1)$ are independent elements in $\mathbb{F}$. 
Furthermore, two pairs in the canonical forms as shown in (\ref{canonical form}) if and only if they are identical.

\end{thm}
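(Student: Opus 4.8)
This statement has two halves: existence (every indecomposable commuting nilpotent pair $(A,B)$ with $A$ similar to $J_m\oplus J_n$ is similar to one of the listed forms) and separation (two of the listed forms are similar only if they coincide). I would treat them in turn, and both rest on an explicit description of the $\mathrm{Stab}(A)$-action on the coefficients $a_i,b_i,c_i,d_i$ of the short form (\ref{B short form}). The convenient setting is to identify a matrix commuting with $J_m\oplus J_n$ with a $2\times2$ matrix over $\mathbb F[t]/(t^m)$ of the shape $\left[\begin{smallmatrix}\alpha&\beta\\t^s\gamma&\delta\end{smallmatrix}\right]$ acting on $\mathbb F[t]/(t^m)\oplus t^s\mathbb F[t]/(t^m)$ with $t$ acting as $A$, where $\alpha=\sum a_it^i$, $\beta=\sum b_it^i$, $\gamma=\sum c_it^i$, $\delta=\sum d_it^i$; then conjugation is ordinary matrix conjugation and $X\in\mathrm{Stab}(A)$ corresponds to such a matrix $\left[\begin{smallmatrix}\xi&\eta\\t^s\zeta&\omega\end{smallmatrix}\right]$ with $x_0:=\xi(0)\ne0$, $w_0:=\omega(0)\ne0$. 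Expanding the four block equations of $BX=XB'$ I would record: (i) conjugation fixes every $a_i$ with $i\le m-r-1$ (this is the content of Lemma \ref{rank inv}); (ii) the leading off-diagonal coefficients scale by nonzero factors, $c_{n-r}$ by $x_0/w_0$ and $b_{n-r}$ by $w_0/x_0$; and (iii) the equalities $d_i=a_i$ $(1\le i\le n-r)$ in Lemma \ref{B indecom form}, which are preserved by all the conjugations used below, force $\delta-\alpha$ to start in degree $\ge n-r+1$, so that off-diagonal products with $\eta$ or $\zeta$ (each with nonzero constant term) contribute nothing in arm lengths $\le n-r$. Deriving (i)--(iii) is the one genuinely computational task; the rest is formal.

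\emph{Existence.} By Lemma \ref{B indecom form} we may take $A=J_m\oplus J_n$ and $B$ in the stated form, of canonical rank $r\in\{1,\dots,n\}$ with $(b_{n-r},c_{n-r})\ne(0,0)$. Suppose $c_{n-r}\ne0$ (if $c_{n-r}=0$ then $b_{n-r}\ne0$, and the argument is the analogous one with $\beta$ and $\gamma$ interchanged). First conjugate by $X$ with $\zeta=0$, $\xi=\omega=1$, and $\eta=y_0+y_1t+\cdots$ chosen so as to annihilate $a_{m-r},a_{m-r+1},\dots,a_{m-1}$ one at a time (possible since $c_{n-r}\ne0$); by (iii) this leaves $\gamma$ untouched and keeps $b_0=\cdots=b_{n-r-1}=0$ and $d_i=a_i$ for $i\le n-r$. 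Then conjugate by a block-diagonal $X=\left[\begin{smallmatrix}\xi&0\\0&\omega\end{smallmatrix}\right]$ with $\xi\in(\mathbb F[t]/(t^m))^\times$ and $\omega\in(\mathbb F[t]/(t^n))^\times$: this fixes $\alpha$ and $\delta$ and sends $\gamma\mapsto\omega^{-1}\xi\gamma$, which for a suitable choice equals $t^{n-r}$, i.e.\ normalizes $c_{n-r}$ to $1$ and kills $c_{n-r+1},\dots,c_{n-1}$. The pair is now $(J_m\oplus J_n,B_{m,n,r})$; the analogous computation produces $B'_{m,n,r}$. Lemma \ref{rank inv} keeps the rank equal to $r$ throughout.

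\emph{Separation.} The canonical rank $r$ is a similarity invariant (Lemma \ref{rank inv}), so forms with different $r$ are non-similar; and by (ii) the vanishing of $c_{n-r}$ is invariant, so no $B_{m,n,r}$ (which has $c_{n-r}=1$) is similar to any $B'_{m,n,r}$ (which has $c_{n-r}=0$). Now fix $r$ and one of the two families, and suppose $BX=XB'$ with $X\in\mathrm{Stab}(A)$ and $B,B'$ both in the corresponding canonical form. By (i), together with the vanishing of the remaining $a_i$, we get $a_i=a_i'$ for all $i$; and $c_{n-r}=c_{n-r}'=1$ together with (ii) forces $x_0=w_0$. Substituting these into the block equations for $\beta,\gamma,\delta$ and reading off the coefficients in order of increasing arm length then forces, one step at a time, $b_i=b_i'$ and $d_i=d_i'$ (respectively $c_i=c_i'$ and $d_i=d_i'$ in the other family) for every free index. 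Hence $B=B'$, and the assignment of (family, $r$, free coefficients) to similarity class is injective, as claimed.

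\emph{Main difficulty.} The substance is concentrated in the transformation rules (i)--(iii) and, inside the separation argument, in verifying that after imposing $a_i=a_i'$ and $x_0=w_0$ the resulting relations among the remaining coefficients and the free entries of $X$ really do peel off triangularly, leaving no residual freedom in the $b_i$, $c_i$, $d_i$. The four-block bookkeeping, with the asymmetry caused by $m>n$ and the shift by $t^s$, is where the actual labour lies; once it is done, both halves come out of a careful but mechanical coefficient comparison.
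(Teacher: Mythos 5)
Your proposal is correct and takes essentially the same approach as the paper: after Lemma \ref{B indecom form}, both arguments reduce $B$ by $\mathrm{Stab}(J_m\oplus J_n)$-conjugation, killing $a_{m-r},\dots,a_{m-1}$ and normalizing the leading nonzero coefficient among $c_{n-r}, b_{n-r}$, your two-stage (unipotent then diagonal) organization in the $\mathbb{F}[t]/(t^m)$-module language being only a repackaging of the paper's interleaved progressive reduction. Your uniqueness sketch (rank invariance, invariance of the vanishing of $c_{n-r}$ via the nonzero scaling $c_{n-r}\mapsto (x_0/w_0)\,c_{n-r}$, then triangular coefficient comparison after forcing $x_0=w_0$) is, if anything, more explicit than the paper's one-sentence appeal to the reduction process.
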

\begin{proof}
Applying the Jordan normal form theorem, we can assume that $A=J_{m}\oplus J_n$. 
Utilizing Theorem \ref{T-A Thm} and Lemma \ref{B indecom form}, we can deduce that $B$ must have the following form:
\begin{equation}
\arraycolsep=2.5pt\def\arraystretch{1.2}
\newcommand*{\temp}{\multicolumn{1}{|}{}}
\left[
\begin{array}{cccccccccccccccc}
0 & a_1 & \cdots & \cdot  & a_{m-r} &  \cdots & a_{m-1} & \temp & 0 & 0  & \cdots & 0 & b_{n-r}  & b_{n-r+1} & \cdots & b_{n-1} \\ 
\cline{1-16}
0 & 0 & \cdots & 0 & c_{n-r} &  \cdots & c_{n-1} & \temp & 0 & a_1 & \cdots & \cdot & a_{n-r} & d_{n-r+1} & \cdots & d_{n-1} \\ 
\end{array}
\right].
\end{equation}
where $(b_{n-r}, c_{n-r}) \ne (0, 0)$ and $r$ is the canonical rank of $B$.

To maintain the matrix $A$ unaltered, we can only apply a conjugate transformation to $B$ through a non-singular matrix $X$ 
that commutes with $A$. As per Theorem \ref{T-A Thm}, $X$ must be in the following form:
\begin{equation}\label{X short form}
\arraycolsep=2.5pt\def\arraystretch{1.2}
\newcommand*{\temp}{\multicolumn{1}{|}{}}
\left[
\begin{array}{ccccccccccccccc}
x_0 & x_1 & \cdots & x_{s-1} & x_s & \cdots  & x_{m-1} & \temp & y_0 & y_1 & \cdots & y_{n-1} \\ 
\cline{1-12}
0 & 0 & \cdots & 0 & z_0 & \cdots & z_{n-1}  & \temp & w_0 & w_1 & \cdots & w_{n-1} \\ 
\end{array}
\right].
\end{equation}
with $x_0\ne 0$ and $w_0\ne 0$ and $s=m-n$. And so, $X^{-1}BX$ has the following form:
\[
\arraycolsep=2.5pt\def\arraystretch{1.2}
\newcommand*{\temp}{\multicolumn{1}{|}{}}
\left[
\begin{array}{cccccccccccccccc}
0 & a_1 & \cdots & a_{m-r-1} & a'_{m-r} &  \cdots & a'_{m-1} & \temp & 0 & 0  & \cdots & 0 & b'_{n-r}  & b'_{n-r+1} & \cdots & b'_{n-1} \\ 
\cline{1-16}
0 & 0 & \cdots & 0 & c'_{n-r} &  \cdots & c'_{n-1} & \temp & 0 & a_1 & \cdots & \cdot & a_{n-r} & d'_{n-r+1} & \cdots & d'_{n-1} \\ 
\end{array}
\right],
\]
where $(b'_{n-r},c'_{n-r}) = (b_{n-r}w_0, c_{n-r} w_0^{-1})$. 

If $c_{n-r}\ne 0$, then we can reduce $c_{n-r}$ to 1 by setting $w_0=c_{n-r}$ and then progressively reduce the following elements to $0$:
\[
a_{m-r}, c_{n-r+1}, a_{m-r+1}, c_{n-r+2},\cdots, a_{m-2}, c_{n-1}, a_{m-1}.
\]
When the reduction process stops, $B$ is in the form of $B_{m,n,r}$.

If $c_{n-r} = 0$,  then $b_{n-r}\ne 0$. We can reduce $b_{n-r}$ to 1 by setting $w_0=b_{n-r}^{-1}$ and then progressively reduce the following elements to $0$:
\[
a_{m-r}, b_{n-r+1}, a_{m-r+1}, b_{n-r+2},\cdots, a_{m-2}, b_{n-1}, a_{m-1}.
\]
When the reduction process stops, $B$ is in the form of $B'_{m,n,r}$.

The second part of the statement is a result of the reduction process described above.
\end{proof}

\begin{thm}\label{thm inv}
Consider positive integers $m$ and $n$ with $m>n$, and let
$(A,B)$ be an indecomposable pair of commuting nilpotent matrices of order $(m+n)\times (m+n)$ over $\mathbb{F}$, where $A=J_{m}\oplus J_n$.
Assume that $B\in\mathrm{NilC(A)}$ has canonical rank $r$ in the following form:
\[
\arraycolsep=2.5pt\def\arraystretch{1.2}
\newcommand*{\temp}{\multicolumn{1}{|}{}}
\left[
\begin{array}{cccccccccccccccc}
0 & a_1 & \cdots & \cdot  & a_{m-r} &  \cdots & a_{m-1} & \temp & 0 & 0  & \cdots & 0 & b_{n-r}  & b_{n-r+1} & \cdots & b_{n-1} \\ 
\cline{1-16}
0 & 0 & \cdots & 0 & c_{n-r} &  \cdots & c_{n-1} & \temp & 0 & a_1 & \cdots & \cdot & a_{n-r} & d_{n-r+1} & \cdots & d_{n-1} \\ 
\end{array}
\right],
\]
with $(b_{n-r}, c_{n-r}) \ne (0,0)$.

If $c_{n-r} \ne 0$, then $(J_{m}\oplus J_n, X^{-1}BX)$ is the canonical form for $(A, B)$, where
\[
\arraycolsep=3.8pt\def\arraystretch{1.2}
\newcommand*{\temp}{\multicolumn{1}{|}{}}
X = \left[
\begin{array}{cccccccccccccc}
1 & 0 & \cdots & 0 & 0 & \cdots  & 0 & \temp & a_{m-r} & \cdots & a_{m-1} & 0 & \cdots & 0  \\ 
\cline{1-14}
0 & 0 & \cdots & 0 & 0 & \cdots & 0 & \temp & c_{n-r} & \cdots & c_{n-1} & 0 & \cdots & 0 \\ 
\end{array}
\right].
\]
If $c_{n-r}=0$ and $b_{n-r} \ne 0$, then $(J_{m}\oplus J_n, XBX^{-1})$ is the canonical form for  $(A, B)$,  where 
\[
\arraycolsep=3.8pt\def\arraystretch{1.2}
\newcommand*{\temp}{\multicolumn{1}{|}{}}
X = \left[
\begin{array}{cccccccccccccc}
1 & 0 & \cdots & 0 & 0 & \cdots  & 0 & \temp & 0 & \cdots & 0 & 0 & \cdots & 0  \\ 
\cline{1-14}
0 & 0 & \cdots & 0 & a_{m-r} & \cdots & a_{m-1} & \temp & b_{n-r} & \cdots & b_{n-1} & 0 & \cdots & 0 \\ 
\end{array}
\right].
\]
\end{thm}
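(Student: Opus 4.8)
The statement is essentially a direct verification: the displayed $X$ is the composite of the elementary conjugations performed in the proof of Theorem~\ref{thm can}, and the plan is to confirm by computation that it carries $B$ into the stated canonical form. I would first check that $X$ is a genuine element of $\mathrm{Stab}(A)$: in both cases $X$ has the block shape~(\ref{X short form}), so it commutes with $A=J_m\oplus J_n$ by Theorem~\ref{T-A Thm}, and its leading coefficients are $x_0=1$ and $w_0=c_{n-r}$ (first case) or $w_0=b_{n-r}$ (second case), both nonzero since $(b_{n-r},c_{n-r})\ne(0,0)$; hence $X$ is invertible, as observed after~(\ref{X short form}).

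Next I would cut the bookkeeping down with Lemmas~\ref{B indecom form} and~\ref{rank inv}. Because conjugation by $X\in\mathrm{Stab}(A)$ preserves the canonical rank, the matrix $Y:=X^{-1}BX$ (resp.\ $Y:=XBX^{-1}$) again has canonical rank $r$; in particular $(A,Y)$ is still indecomposable, so by Lemma~\ref{B indecom form} it already sits in the reduced shape — its top-left block still reads $0,a_1,\dots,a_{m-r-1},\ast,\dots,\ast$ with the \emph{same} $a_1,\dots,a_{m-r-1}$, the pairs $(b'_i,c'_i)$ of $Y$ vanish for $0\le i<n-r$, and the bottom-right block has $d'_i=a_i$ for $1\le i\le n-r$. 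Thus the only quantities still to be pinned down are the $r$ ``tail'' coefficients $a'_{m-r},\dots,a'_{m-1}$ of the top-left block of $Y$, the list $(c'_{n-r},\dots,c'_{n-1})$ in the first case, and the list $(b'_{n-r},\dots,b'_{n-1})$ together with $c'_{n-r}$ in the second case; it remains to show these equal $(0,\dots,0)$ and $(1,0,\dots,0)$ in the first case, and $(0,\dots,0)$, $(1,0,\dots,0)$, $0$ in the second.

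To compute them I would work in coordinates, recording a matrix commuting with $A$ by the $2\times2$ matrix $\left[\begin{smallmatrix} p & t^{s}\beta\\ \gamma & q\end{smallmatrix}\right]$ read off its first and $(m{+}1)$-st rows (with $s=m-n$, $p=a_0+a_1t+\cdots$, $q=d_0+d_1t+\cdots$, and $\beta,\gamma$ collecting the $b$'s and $c$'s, all truncated modulo the appropriate power of $t$); under this correspondence, which is just Theorem~\ref{T-A Thm} restated, composition of such matrices is ordinary $2\times2$ matrix multiplication. Here $B$ becomes $\left[\begin{smallmatrix} p & t^{s}\beta_B\\ \gamma_B & q_B\end{smallmatrix}\right]$, and the content of the rank-$r$ hypothesis $(b_i,c_i)=(0,0)$ for $i<n-r$ is that $t^{n-r}$ divides $\gamma_B$ (first case) and $\beta_B$ (second case); the content of the particular choice of $X$ is that its leading block $q_X$ equals $\gamma_B/t^{n-r}$ (first case) resp.\ $\beta_B/t^{n-r}$ (second case), while the other off-diagonal entry of $X$ carries exactly the coefficients $a_{m-r},\dots,a_{m-1}$. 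Since $q_X$ has nonzero constant term, substituting $q_X^{-1}\gamma_B=t^{n-r}$ (resp.\ $q_X^{-1}\beta_B=t^{n-r}$) into the block product $X^{-1}BX$ (resp.\ $XBX^{-1}$) makes every cross-term telescope: the top-left block becomes $p$ minus $t^{m-r}$ times that off-diagonal polynomial, which is precisely the tail $a_{m-r}t^{m-r}+\cdots+a_{m-1}t^{m-1}$ of $p$, so it collapses to $a_1t+\cdots+a_{m-r-1}t^{m-r-1}$; the bottom-left block becomes $t^{n-r}$ in the first case, the top-right block becomes $t^{s}\cdot t^{n-r}=t^{m-r}$ in the second; and all the remaining entries are left free, giving the parameters $b'_i,c'_i,d'_i$ of $B_{m,n,r}$ resp.\ $B'_{m,n,r}$.

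The one step that is not purely mechanical is checking, in the second case, that the boundary coefficient $c'_{n-r}$ of the new bottom-left block vanishes, so the output is genuinely $B'_{m,n,r}$ and not merely some matrix of canonical rank $r$. This drops out of the same computation: modulo $t^{n-r+1}$ one has $p\equiv q_B$ (both equal $a_1t+\cdots+a_{n-r}t^{n-r}$, using the relations $d_i=a_i$ for $i\le n-r$ supplied by Lemma~\ref{B indecom form}) and $\gamma_B\equiv 0$ (here $c_{n-r}=0$), and since $m-r\ge n-r+1$ because $m>n$, the bottom-left block of $XBX^{-1}$ is forced to be $\equiv 0\pmod{t^{n-r+1}}$. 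I expect this congruence to be the only spot where a reader must pause; past it the argument is routine truncated-polynomial algebra, the main practical hazard being to keep the truncations modulo $t^{m}$ and modulo $t^{n}$ straight throughout.
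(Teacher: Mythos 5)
Your proposal is correct and follows essentially the paper's own route: Lemma~\ref{rank inv} (with Lemma~\ref{B indecom form}) fixes the reduced shape of the conjugated matrix, and the remaining entries are pinned down by direct computation — the paper does this by comparing coefficients in $BX=XB'$, while you do the equivalent computation in the truncated-polynomial ($2\times2$ block) model of the commutant, including the key cancellations $q_X^{-1}\gamma_B=t^{n-r}$, resp.\ $q_X^{-1}\beta_B=t^{n-r}$, and the vanishing of $c'_{n-r}$ in the second case. One remark: your reading of the second transformation matrix, with $a_{m-r},\dots,a_{m-1}$ occupying the first $r$ positions of the $z$-block so that the off-diagonal polynomial is $a_{m-r}+a_{m-r+1}t+\cdots+a_{m-1}t^{r-1}$, is the interpretation under which the tail of the top-left block telescopes away and which matches the Appendix formulas, so your verification is the right one.
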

\begin{proof}
If $c_{n-r} \ne 0$, then we can define $B'=X^{-1}BX$. 
Since $(A,B)$ is an indecomposable pair with $B$ having canonical rank $r$, Lemma \ref{rank inv} implies 
that $(A,B')$ is also indecomposable and $B'$ has canonical rank $r$. Therefore, $B'$ can be expressed in the following form:
\[
\arraycolsep=2.5pt\def\arraystretch{1.2}
\newcommand*{\temp}{\multicolumn{1}{|}{}}
\left[
\begin{array}{cccccccccccccccc}
0 & a'_1 & \cdots & \cdot  & a'_{m-r} &  \cdots & a'_{m-1} & \temp & 0 & 0  & \cdots & 0 & b'_{n-r}  & b'_{n-r+1} & \cdots & b'_{n-1} \\ 
\cline{1-16}
0 & 0 & \cdots & 0 & c'_{n-r} &  \cdots & c'_{n-1} & \temp & 0 & a'_1 & \cdots & \cdot & a'_{n-r} & d'_{n-r+1} & \cdots & d'_{n-1} \\ 
\end{array}
\right],
\]
with $c'_{n-r} \ne 0$. Since $BX=XB'$, by comparing elements pairwise on both sides, we can first obtain:
\[
(c'_{n-r}, c'_{n-r+1}, \cdots, c'_{n-1}) = (1, 0, \cdots, 0),
\]
and then we can obtain
\[
(a'_1, \cdots, a'_{m-r-1}, a'_{m-r}, \cdots, a'_{m-1}) = (a_1, \cdots, a_{m-r-1}, 0, \cdots, 0).
\]
Therefore, by Theorem \ref{thm can}, $(A, B')$ is the canonical form for $(A,B)$.

If $c_{n-r}=0$ and $b_{n-r} \ne 0$, a similar argument shows that $(A, XBX^{-1})$ is the canonical form for $(A,B)$, using the same reasoning as before.

\end{proof}

\vspace{3mm}
\textbf{\large{Appendix: Canonical forms for the case when $m=6$ and $n=4$}}
\vspace{2mm}

Let $(A,B)$ be an indecomposable pair of commuting nilpotent matrices of order $10\times 10$ over $\mathbb{F}$, where $A=J_{6}\oplus J_4$, and let $r$ be the canonical rank of $B$.
The canonical form for $(A,B)$ can be derived using Theorem \ref{thm inv}.

\textbf{Case 1} ($r=1$). 
\begin{itemize}
\item The commuting nilpotent pair
\[
\left(
J_6\oplus J_4 , 
\left[
\newcommand*{\temp}{\multicolumn{1}{|}{}}
\begin{array}{ccccccccccc}
0 & a_1 & a_2 & a_3 & a_4 & a_5 & \temp & 0 & 0 & 0 & b_3 \\
\cline{1-11}
0 & 0 & 0 & 0 & 0 & c_3 & \temp & 0 & a_1 & a_2 & a_3 \\
\end{array}
\right]
\right),
\]
where $c_3\ne 0$, has the following canonical form:
\[
\left(
J_6\oplus J_4 , 
\left[
\newcommand*{\temp}{\multicolumn{1}{|}{}}
\begin{array}{ccccccccccc}
0 & a_1 & a_2 & a_3 & a_4 & 0 & \temp & 0 & 0 & 0 & b_3c_3 \\
\cline{1-11}
0 & 0 & 0 & 0 & 0 & 1 & \temp & 0 & a_1 & a_2 & a_3 \\
\end{array}
\right]
\right).
\]
\item The commuting nilpotent pair
\[
\left(
J_6\oplus J_4 , 
\left[
\newcommand*{\temp}{\multicolumn{1}{|}{}}
\begin{array}{ccccccccccc}
0 & a_1 & a_2 & a_3 & a_4 & a_5 & \temp & 0 & 0 & 0 & b_3 \\
\cline{1-11}
0 & 0 & 0 & 0 & 0 & c_3 & \temp & 0 & a_1 & a_2 & a_3 \\
\end{array}
\right]
\right),
\]
where $c_3=0$ and $b_3\ne 0$, has the following canonical form:
\[
\left(
J_6\oplus J_4 , 
\left[
\newcommand*{\temp}{\multicolumn{1}{|}{}}
\begin{array}{ccccccccccc}
0 & a_1 & a_2 & a_3 & a_4 & 0 & \temp & 0 & 0 & 0 & 1 \\
\cline{1-11}
0 & 0 & 0 & 0 & 0 & 0 & \temp & 0 & a_1 & a_2 & a_3 \\
\end{array}
\right]
\right).
\]
\end{itemize}
\textbf{Case 2} ($r=2$). 
\begin{itemize}
\item The commuting nilpotent pair
\[
\left(
J_6\oplus J_4 , 
\left[
\newcommand*{\temp}{\multicolumn{1}{|}{}}
\begin{array}{ccccccccccc}
0 & a_1 & a_2 & a_3 & a_4 & a_5 & \temp & 0 & 0 & b_2 & b_3 \\
\cline{1-11}
0 & 0 & 0 & 0 & c_2 & c_3 & \temp & 0 & a_1 & a_2 & d_3 \\
\end{array}
\right]
\right),
\]
where $c_2\ne 0$, has the following canonical form:
\[
\left(
J_6\oplus J_4 , 
\left[
\newcommand*{\temp}{\multicolumn{1}{|}{}}
\begin{array}{ccccccccccc}
0 & a_1 & a_2 & a_3 & 0 & 0 & \temp & 0 & 0 & b'_2 & b'_3 \\
\cline{1-11}
0 & 0 & 0 & 0 & 1 & 0 & \temp & 0 & a_1 & a_2 & d_3 \\
\end{array}
\right]
\right),
\]
where 
\begin{align*}
b'_2 &= b_2c_2 \\
b'_3 &= b_2c_3+b_3c_2+(a_3-d_3)a_4. \\
\end{align*}
\item The commuting nilpotent pair
\[
\left(
J_6\oplus J_4 , 
\left[
\newcommand*{\temp}{\multicolumn{1}{|}{}}
\begin{array}{ccccccccccc}
0 & a_1 & a_2 & a_3 & a_4 & a_5 & \temp & 0 & 0 & b_2 & b_3 \\
\cline{1-11}
0 & 0 & 0 & 0 & c_2 & c_3 & \temp & 0 & a_1 & a_2 & d_3 \\
\end{array}
\right]
\right),
\]
where $c_2=0$ and $b_2\ne 0$, has the following canonical form:
\[
\left(
J_6\oplus J_4 , 
\left[
\newcommand*{\temp}{\multicolumn{1}{|}{}}
\begin{array}{ccccccccccc}
0 & a_1 & a_2 & a_3 & 0 & 0 & \temp & 0 & 0 & 1 & 0 \\
\cline{1-11}
0 & 0 & 0 & 0 & 0 & c'_3 & \temp & 0 & a_1 & a_2 & d_3 \\
\end{array}
\right]
\right),
\]
where 
\begin{align*}
c'_3 &= b_2c_3+(a_3-d_3)a_4. \\
\end{align*}
\end{itemize}

\textbf{Case 3} $(r=3)$.
\begin{itemize}
\item The commuting nilpotent pair
\[
\left(
J_6\oplus J_4 , 
\left[
\newcommand*{\temp}{\multicolumn{1}{|}{}}
\begin{array}{ccccccccccc}
0 & a_1 & a_2 & a_3 & a_4 & a_5 & \temp & 0 & b_1 & b_2 & b_3 \\
\cline{1-11}
0 & 0 & 0 & c_1 & c_2 & c_3 & \temp & 0 & a_1 & d_2 & d_3 \\
\end{array}
\right]
\right),
\]
where $c_1\ne 0$, has the following canonical form:
\[
\left(
J_6\oplus J_4 , 
\left[
\newcommand*{\temp}{\multicolumn{1}{|}{}}
\begin{array}{ccccccccccc}
0 & a_1 & a_2 & 0 & 0 & 0 & \temp & 0 & b'_1 & b'_2 & b'_3 \\
\cline{1-11}
0 & 0 & 0 & 1 & 0 & 0 & \temp & 0 & a_1 & d_2 & a_3+d_3 \\
\end{array}
\right]
\right),
\]
where 
\begin{align*}
b'_1 &= b_1c_1 \\
b'_2 &= b_1c_2+b_2c_1+(a_2-d_2)a_3 \\
b'_3 &= b_1c_3+b_2c_2+b_3c_1+(a_2-d_2)a_4-a_3d_3. \\
\end{align*}
\item The commuting nilpotent pair
\[
\left(
J_6\oplus J_4 , 
\left[
\newcommand*{\temp}{\multicolumn{1}{|}{}}
\begin{array}{ccccccccccc}
0 & a_1 & a_2 & a_3 & a_4 & a_5 & \temp & 0 & b_1 & b_2 & b_3 \\
\cline{1-11}
0 & 0 & 0 & c_1 & c_2 & c_3 & \temp & 0 & a_1 & d_2 & d_3 \\
\end{array}
\right]
\right),
\]
where $c_1=0$ and $b_1\ne 0$, has the following canonical form:
\[
\left(
J_6\oplus J_4 , 
\left[
\newcommand*{\temp}{\multicolumn{1}{|}{}}
\begin{array}{ccccccccccc}
0 & a_1 & a_2 & 0 & 0 & 0 & \temp & 0 & 1 & 0 & 0 \\
\cline{1-11}
0 & 0 & 0 & 0 & c'_2 & c'_3 & \temp & 0 & a_1 & d_2 & a_3+d_3 \\
\end{array}
\right]
\right),
\]
where 
\begin{align*}
c'_2 &= b_1c_2+(a_2-d_2)a_3 \\
c'_3 &= b_1c_3+b_2c_2+(a_2-d_2)a_4-a_3d_3. \\
\end{align*}
\end{itemize}
\textbf{Case 4} ($r=4$). 
\begin{itemize}
\item The commuting nilpotent pair
\[
\left(
J_6\oplus J_4 , 
\left[
\newcommand*{\temp}{\multicolumn{1}{|}{}}
\begin{array}{ccccccccccc}
0 & a_1 & a_2 & a_3 & a_4 & a_5 & \temp & b_0 & b_1 & b_2 & b_3 \\
\cline{1-11}
0 & 0 & c_0 & c_1 & c_2 & c_3 & \temp & 0 & d_1 & d_2 & d_3 \\
\end{array}
\right]
\right),
\]
where $c_0\ne 0$, has the following canonical form:
\[
\left(
J_6\oplus J_4 , 
\left[
\newcommand*{\temp}{\multicolumn{1}{|}{}}
\begin{array}{ccccccccccc}
0 & a_1 & 0 & 0 & 0 & 0 & \temp & b'_0 & b'_1 & b'_2 & b'_3 \\
\cline{1-11}
0 & 0 & 1 & 0 & 0 & 0 & \temp & 0 & d_1 & a_2+d_2 & a_3+d_3 \\
\end{array}
\right]
\right),
\]
where 
\begin{align*}
b'_0 &= b_0c_0 \\
b'_1 &= b_0c_1+b_1c_0+(a_1-d_1)a_2 \\
b'_2 &=  b_0c_2+b_1c_1+b_2c_0+(a_1-d_1)a_3-a_2d_2 \\
b'_3 &= b_0c_3+b_1c_2+b_2c_1+b_3c_0+(a_1-d_1)a_4-a_2d_3-a_3d_2. \\
\end{align*}
\item The commuting nilpotent pair
\[
\left(
J_6\oplus J_4 , 
\left[
\newcommand*{\temp}{\multicolumn{1}{|}{}}
\begin{array}{ccccccccccc}
0 & a_1 & a_2 & a_3 & a_4 & a_5 & \temp & b_0 & b_1 & b_2 & b_3 \\
\cline{1-11}
0 & 0 & c_0 & c_1 & c_2 & c_3 & \temp & 0 & d_1 & d_2 & d_3 \\
\end{array}
\right]
\right),
\]
where $c_0=0$ and $b_0\ne 0$, has the following canonical form:
\[
\left(
J_6\oplus J_4 , 
\left[
\newcommand*{\temp}{\multicolumn{1}{|}{}}
\begin{array}{ccccccccccc}
0 & a_1 & 0 & 0 & 0 & 0 & \temp & 1 & 0 & 0 & 0 \\
\cline{1-11}
0 & 0 & 0 & c'_1 & c'_2 & c'_3 & \temp & 0 & d_1 & a_2+d_2 & a_3+d_3 \\
\end{array}
\right]
\right),
\]
where 
\begin{align*}
c'_1 &= b_0c_1+(a_1-d_1)a_2 \\
c'_2 &=  b_0c_2+b_1c_1+(a_1-d_1)a_3-a_2d_2 \\
c'_3 &= b_0c_3+b_1c_2+b_2c_1+(a_1-d_1)a_4-a_2d_3-a_3d_2. \\
\end{align*}
\end{itemize}

\vspace{0.5cm}
\textit{Email address}: \texttt{jiuzhao.hua@gmail.com}

\end{document}